\def\VR{\kern-\arraycolsep\strut\vrule &\kern-\arraycolsep}
\def\vr{\kern-\arraycolsep & \kern-\arraycolsep}
\definecolor{todo-background-color}{gray}{0.95}
    \def\@settitle{%
      \vspace*{-10pt}
      \begin{flushleft}%
        \LARGE\bfseries
        \strut\@title\strut
      \end{flushleft}%
    }
    \def\@setauthors{%
      \begingroup
      \def\thanks{\protect\thanks@warning}%
      \trivlist
      \raggedright
      \large \@topsep27\p@\relax
      \advance\@topsep by -\baselineskip
    \item\relax
      \author@andify\authors
      \def\\{\protect\linebreak}%
      \authors
      \ifx\@empty\contribs
      \else
      ,\penalty-3 \space \@setcontribs
      \@closetoccontribs
      \fi
      \normalfont
      \endtrivlist
      \endgroup
    }
    \def\@setaddresses{\par
      \nobreak \begingroup
      \small\raggedright
      \def\author##1{\nobreak\addvspace\smallskipamount}%
      \def\\{\unskip, \ignorespaces}%
      \interlinepenalty\@M
      \def\address##1##2{\begingroup
        \par\addvspace\bigskipamount\noindent
        \@ifnotempty{##1}{(\ignorespaces##1\unskip) }%
        {\ignorespaces##2}\par\endgroup}%
      \def\curraddr##1##2{\begingroup
        \@ifnotempty{##2}{\nobreak\noindent\curraddrname
          \@ifnotempty{##1}{, \ignorespaces##1\unskip}\/:\space
          ##2\par}\endgroup}%
      \def\email##1##2{\begingroup
        \@ifnotempty{##2}{\nobreak\noindent E-mail address%
          \@ifnotempty{##1}{, \ignorespaces##1\unskip}\/:\space
          \ttfamily##2\par}\endgroup}%
      \def\urladdr##1##2{\begingroup
        \def~{\char`\~}%
        \@ifnotempty{##2}{\nobreak\noindent\urladdrname
          \@ifnotempty{##1}{, \ignorespaces##1\unskip}\/:\space
          \ttfamily##2\par}\endgroup}%
      \addresses
      \endgroup
      \global\let\addresses=\@empty
    }
    \def\@setabstracta{%
      \ifvoid\abstractbox
      \else
      \skip@17pt \advance\skip@-\lastskip
      \advance\skip@-\baselineskip \vskip\skip@
      \box\abstractbox
      \prevdepth\z@ 
      \vskip-28pt
      \fi
    }
    \renewenvironment{abstract}{%
      \ifx\maketitle\relax
      \ClassWarning{\@classname}{Abstract should precede
        \protect\maketitle\space in AMS document classes; reported}%
      \fi
      \global\setbox\abstractbox=\vtop \bgroup
      \normalfont\small
      \list{}{\labelwidth\z@
        \leftmargin0pc \rightmargin\leftmargin
        \listparindent\normalparindent \itemindent\z@
        \parsep\z@ \@plus\p@
        
      }%
    \item[\hskip\labelsep\bfseries\abstractname.]%
    }{%
      \endlist\egroup
      \ifx\@setabstract\relax \@setabstracta \fi
    }
    \def\ps@headings{\ps@empty
      \def\@evenhead{%
        \setTrue{runhead}%
        \normalfont\scriptsize
        \rlap{\thepage}\hfill
        \def\thanks{\protect\thanks@warning}%
        \leftmark{}{}}%
      \def\@oddhead{%
        \setTrue{runhead}%
        \normalfont\scriptsize
        \def\thanks{\protect\thanks@warning}%
        \rightmark{}{}\hfill \llap{\thepage}}%
      \let\@mkboth\markboth
    }\ps@headings
    \def\section{\@startsection{section}{1}%
      \z@{-1.4\linespacing\@plus-.5\linespacing}{.8\linespacing}%
      {\normalfont\bfseries\Large}}
    \def\subsection{\@startsection{subsection}{2}%
      \z@{-.8\linespacing\@plus-.3\linespacing}{.5\linespacing\@plus.2\linespacing}%
      {\normalfont\bfseries\large}}
    \def\subsubsection{\@startsection{subsubsection}{3}%
      \z@{.7\linespacing\@plus.2\linespacing}{-1.5ex}%
      {\normalfont\itshape}}
    \def\paragraph{\@startsection{paragraph}{4}%
      \z@{.7\linespacing\@plus.2\linespacing}{-1.5ex}%
      {\normalfont\itshape}}
    \def\@secnumfont{\bfseries}
    \renewcommand\contentsnamefont{\bfseries}
    \def\@starttoc#1#2{\begingroup
      \setTrue{#1}%
      \par\removelastskip\vskip\z@skip
      \@startsection{}\@M\z@{\linespacing\@plus\linespacing}%
      {.5\linespacing}{
        \contentsnamefont}{#2}%
      \ifx\contentsname#2%
      \else \addcontentsline{toc}{section}{#2}\fi
      \makeatletter
      \@input{\jobname.#1}%
      \if@filesw
      \@xp\newwrite\csname tf@#1\endcsname
      \immediate\@xp\openout\csname tf@#1\endcsname \jobname.#1\relax
      \fi
      \global\@nobreakfalse \endgroup
      \addvspace{32\p@\@plus14\p@}%
      \let\tableofcontents\relax
    }
    \def\contentsname{Contents}
    \def\l@section{\@tocline{2}{.5ex}{0mm}{5pc}{}}
    \def\l@subsection{\@tocline{2}{0pt}{2em}{5pc}{}}
\newcommand{\shortxra}[2][]{\ext@arrow 0359\rightarrowfill@{#1}{#2}}
\def\longrightarrowfill@{\arrowfill@\relbar\relbar\longrightarrow}
\newcommand{\longxra}[2][]{\ext@arrow 0359\longrightarrowfill@{#1}{#2}}
\def\addtagsub#1{\let\oldtf=\tagform@\def\tagform@##1{\oldtf{##1}\hbox{$_{#1}$}}}
\def\Nopagebreak{\@nobreaktrue\nopagebreak}
\newtheoremstyle{theorem-giventitle}
        {}{}              
        {\itshape}                      
        {}                              
        {\bfseries}                     
        {.}                             
        {\thm@headsep}                             
        {\thmnote{\bfseries#3}}
\newtheoremstyle{theorem-givenlabel}
        {}{}              
        {\itshape}                      
        {}                              
        {\bfseries}                     
        {.}                             
        {\thm@headsep}                             
        {\thmname{#1}~\thmnumber{#3}\setcurrentlabel{#3}}
\newtheoremstyle{definition-giventitle}
        {}{}              
        {}                      
        {}                              
        {\bfseries}                     
        {.}                             
        {\thm@headsep}                             
        {\thmnote{\bfseries#3}}
\def\setcurrentlabel#1{\gdef\@currentlabel{#1}}
\newtheorem{theorem}{Theorem}[section]
\newtheorem{proposition}[theorem]{Proposition}
\theoremstyle{definition}
\newtheorem*{case2'}{Case 2$'$}
\theoremstyle{theorem-giventitle}
\newtheorem{theorem-named}{}
\theoremstyle{theorem-givenlabel}
\newtheorem{theorem-labeled}{Theorem}
\theoremstyle{definition-giventitle}
\newtheorem{definition-named}{}
\newtheorem{conjecture-named}{}
\newtheorem{case-named}{}
\numberwithin{equation}{section}
\newcommand{\vol}{\operatorname{Vol}}
\def\sm{\smallsetminus}
\begin{document}

\title{Homology spheres and property R}

\author{Min Hoon Kim}
\address{School of Mathematics, Korea Institute for Advanced Study, Seoul, Republic of Korea}
\email{kminhoon@kias.re.kr}
\author{JungHwan Park}
\address{School of Mathematics, Georgia Institute of Technology, Atlanta, Georgia, United States}
\email{junghwan.park@math.gatech.edu}
\def\subjclassname{\textup{2010} Mathematics Subject Classification}
\expandafter\let\csname subjclassname@1991\endcsname=\subjclassname
\expandafter\let\csname subjclassname@2000\endcsname=\subjclassname
\subjclass{%
  57M27, 
  57M25
}

\begin{abstract} We present infinitely many homology spheres which contain two distinct knots whose $0$-surgeries are $S^1 \times S^2$. This resolves a question posed by Kirby and Melvin in $1978$.
\end{abstract}

\maketitle

\section{Introduction}\label{sec:intro}
A knot $K$ in $S^3$ is said to satisfy \emph{property R}, if surgery on $K$ cannot give $S^1\times S^2$. A celebrated result of Gabai \cite{Gabai:1987-1} states that every non-trivial knot in $S^3$ satisfies property R. Now, we replace $S^3$ with a homology sphere. A homology sphere $M$ contains a knot whose $0$-surgery is $S^1\times S^2$ if and only if $M$ is the boundary of a contractible $4$-manifold with a $0$, $1$, and $2$-handle. The natural question for such $M$, asked by Kirby and Melvin in \cite{Kirby-Melvin:1978-1} (see also \cite[Problem~1.16]{Kirby:1997-1}), is if there is only one knot in $M$ that can produce $S^1\times S^2$ up to equivalence. In this article, we answer this question in the negative. Recall that two knots in a homology sphere $M$ are \emph{equivalent} if there is an orientation-preserving homeomorphism of $M$ that takes one knot to the other. 

\begin{theorem}\label{thm:main} There exist infinitely many homology spheres which contain two distinct knots whose $0$-surgeries are $S^1 \times S^2$.
\end{theorem}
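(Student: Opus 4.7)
A knot $K$ in a homology sphere $M$ with $M_0(K)\cong S^1\times S^2$ corresponds, after turning the trace of the $0$-surgery upside down, to the attaching circle of the dual $2$-handle in a handle decomposition of a contractible $4$-manifold $W$ with $\partial W = M$ built from one $0$-, one $1$-, and one $2$-handle. To produce two inequivalent such knots $K_1,K_2\subset M$ it therefore suffices to exhibit (i) a contractible $W$ admitting two essentially different such handle decompositions, and (ii) an invariant of the pair $(M,K_i)$ that distinguishes them. The plan is to execute this for an infinite family of Mazur-type manifolds.

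\textbf{Step 1: construction.} I would adapt Piccirillo's \emph{$RBG$-link} technique. The idea is to build a $3$-component link $R\cup B\cup G\subset S^3$ in which $R$ and $B$ are unknots with integer framings, arranged so that surgery on $R$ and surgery on $B$ both produce the \emph{same} homology sphere $M$ (rather than $S^3$, as in the original setup), with $G$ descending in the two views to two knots $K_1,K_2\subset M$. A Kirby calculus computation---using that $R,B$ are unknots and that the framings are chosen so that the $2$-handle coming from $0$-surgery on $G$ cancels against a canonical $1$-handle---then yields $M_0(K_1)\cong M_0(K_2)\cong S^1\times S^2$. A twist-box parameter $n\in\Z$ inserted into the link diagram produces a family $(M_n,K_1^n,K_2^n)$.

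\textbf{Step 2: distinguishing $K_1^n$ from $K_2^n$.} Since $M_n$ is an integer homology sphere, each complement $M_n\sm K_i^n$ has $H_1=\Z$, giving a well-defined Alexander polynomial $\Delta_{K_i^n}(t)\in\Z[t^{\pm 1}]$ from the infinite cyclic cover. An orientation-preserving homeomorphism of $M_n$ carrying $K_1^n$ to $K_2^n$ would induce an isomorphism of Alexander modules, so $\Delta_{K_1^n}\doteq\Delta_{K_2^n}$ is necessary for the knots to be equivalent. I would read Wirtinger-type presentations of the knot groups $\pi_1(M_n\sm K_i^n)$ directly off the two surgery diagrams, compute Alexander matrices via Fox calculus, and show that the polynomials disagree for infinitely many $n$. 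If Alexander polynomials are not fine enough, a twisted Alexander polynomial or the knot Floer invariant $\tau$ (interpreted for knots in a homology sphere via the surgery formula) is a natural replacement.

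\textbf{Main obstacle.} The delicate point is engineering the link simultaneously so that (a) the common surgery manifold is a homology sphere rather than $S^3$, (b) both $0$-surgeries on $G$ collapse to $S^1\times S^2$, and (c) the resulting knot invariants are distinguishable across a whole family; the three constraints pull against one another and impose a rigid framing/linking bookkeeping on the link. A secondary task is to show that infinitely many of the $M_n$ are pairwise non-homeomorphic---for instance by computing Casson invariants $\lambda(M_n)$ from the surgery description and checking that $\lambda(M_n)$ varies non-trivially with $n$---so that the construction yields infinitely many distinct homology spheres rather than just infinitely many knot pairs in finitely many $M$'s.
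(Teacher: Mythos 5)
Your Step 1 is in the right spirit---the paper's construction is essentially the simplest instance of the dual-knot idea you describe: $M_n$ is $0$-surgery on a two-component link of unknots with linking number $1$ (with a twist parameter $n$), and the two knots are the meridians of the two components, so that a slam dunk immediately gives $S^1\times S^2$ for both $0$-surgeries. The fatal problem is Step 2. \emph{Every} knot $K$ in a homology sphere $M$ with $M_0(K)\cong S^1\times S^2$ has trivial Alexander module, hence $\Delta_K(t)\doteq 1$: the longitude bounds a Seifert surface in the exterior $X=M\sm \nu(K)$, so its lift to the infinite cyclic cover $\tilde X$ is null-homologous, and a Mayer--Vietoris argument identifies $H_1(\tilde X)$ with $H_1$ of the infinite cyclic cover of the $0$-surgery, which is $H_1(\R\times S^2)=0$. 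So the invariant you propose as your main tool is identically trivial on the entire class of knots under consideration and cannot distinguish anything. Worse, your fallback invariants are largely ruled out by the same structural feature you yourself invoke: each such knot is the belt sphere of the $2$-handle of the contractible $4$-manifold $W$, hence bounds a smoothly embedded disk (the cocore) in $W$; consequently concordance-type invariants that obstruct sliceness in homology balls---$\tau$, $\varepsilon$, twisted Alexander/Casson--Gordon-style obstructions, Arf---vanish or give no information for both knots. Distinguishing them requires an invariant of the knot complement that is not of concordance type.

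This is exactly where the paper goes elsewhere: it splits $M_n$ along an embedded torus as $N_n\cup_\Sigma N'$ with $K\subset N_n$ and $K'\subset N'$, verifies by rigorous SnapPy computations that the relevant pieces are hyperbolic with $\vol(N\sm\gamma)=7.327\cdots$ and $\vol(N')=3.663\cdots$, and applies Thurston's hyperbolic Dehn filling theorem plus uniqueness of the JSJ decomposition. An equivalence of $K$ and $K'$ would force a homeomorphism $N_n\cong N'$, contradicted by the volumes; the same volume argument (rather than your proposed, and uncomputed, Casson invariant calculation) yields infinitely many pairwise non-homeomorphic $M_{n_i}$. So while your construction step could likely be made to work, the distinguishing step as proposed would fail for provable reasons, and repairing it essentially forces you toward geometric invariants of the complement of the kind the paper uses (or, as in Hayden--Mark--Piccirillo, Floer-theoretic invariants that are not concordance invariants).
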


The proof of the theorem proceeds in two steps. First, we construct two component links with unknotted components and linking number $1$. The homology spheres obtained by performing the $0$-surgeries on the links have a property that the $0$-surgery of the meridian of either component of the link is $S^1 \times S^2$. The second step is to show that the meridians are inequivalent knots in infinitely many such homology spheres. This is achieved by using Thurston's hyperbolic Dehn surgery theorem \cite{Thurston:1978-1} and the uniqueness of the JSJ decomposition of a 3-manifold  \cite{Jaco-Shalen:1978-1, Johannson:1979-1}.

\subsubsection*{Acknowledgments}
This project started when the first named author was visiting the Georgia Institute of Technology and he thanks the Georgia Institute of Technology for its generous hospitality and support. We would also like to thank Stefan Friedl, Kouki Sato, and Jennifer Hom for helpful conversations. Kyle Hayden, Thomas E. Mark, and Lisa Piccirillo informed us that they have an independent proof of Theorem~\ref{thm:main} in their paper on exotic Mazur manifolds \cite{Hayden-Mark-Piccirillo:2019-1} which appeared on the ar{X}iv after the first version of this article.

\section{Proof of Theorem~\ref{thm:main}}\label{sec:proof}

\begin{figure}[htb]
\includegraphics{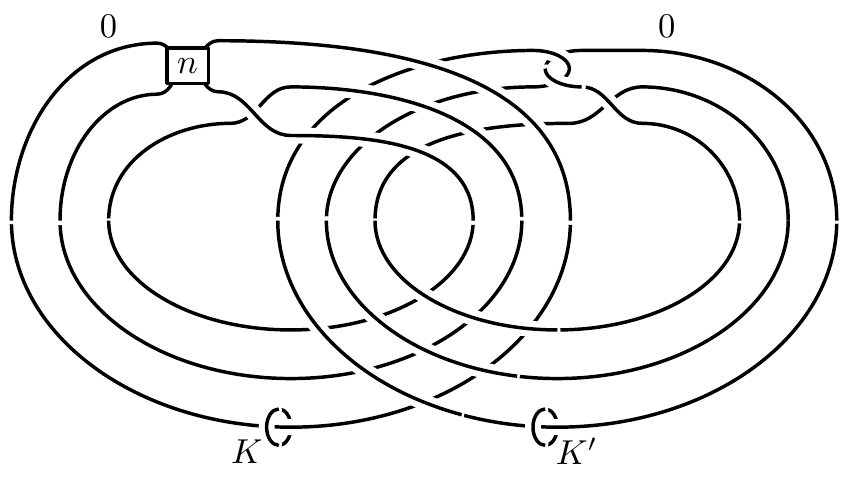}
\caption{Two knots $K$ and $K'$ in a homology sphere $M_n$.}\label{fig:0}
\end{figure}


For each integer $n$, we consider the homology sphere $M_n$ and the knots $K$ and $K'$ in $M_n$ described in Figure~\ref{fig:0}. We show in Proposition~\ref{prop:S1S2} that the $0$-surgeries on $M_n$ along $K$ and $K'$ are $S^1 \times S^2$. We complete the proof by observing that there is a strictly increasing sequence $\{n_i\}_{i=1}^\infty$ of integers such that the homology spheres $M_{n_i}$ are mutually distinct and the knots $K$ and $K'$ are not equivalent.

\begin{proposition}\label{prop:S1S2} Let $K$ and $K'$ be the knots in $M_n$ described in Figure~\ref{fig:0}, then the $0$-surgeries on $K$ and $K'$ are $S^1 \times S^2$.
\end{proposition}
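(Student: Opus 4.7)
From the introduction I understand that $M_n$ arises as the $0$-surgery in $S^3$ on a two-component link $L_1 \cup L_2$ whose components are unknotted and satisfy $\lk(L_1,L_2)=1$, while $K$ and $K'$ are the meridians of $L_1$ and $L_2$ viewed as knots in $M_n$. The two assertions of the proposition have completely symmetric proofs, so I will write out the argument only for $K$ and get the claim for $K'$ by exchanging the roles of $L_1$ and $L_2$.

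The plan is to convert the $0$-surgery on $K$ in $M_n$ into a surgery diagram in $S^3$ and then simplify by Kirby calculus. Let $\mu \subset S^3$ be a small $0$-framed meridian of $L_1$. Unwinding the definition of the meridian $K$ and of $M_n$, one has
\[
(M_n)_0(K) \;=\; S^3_0\bigl(L_1 \cup L_2 \cup \mu\bigr),
\]
with all three surgery coefficients equal to $0$. Since surgeries on pairwise disjoint framed knots commute, I may carry out the $L_1$- and $\mu$-surgeries first and defer the surgery on $L_2$.

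The essential step, and the one I expect to require the most care, is to cancel the $0$-surgery on $L_1$ against the $0$-surgery on its $0$-framed meridian $\mu$. Geometrically, the $L_1$-surgery removes $N(L_1)$ and reglues a new solid torus; under this operation, $\mu$ sits as the core of the newly glued solid torus, equipped with a framing for which a further $0$-surgery on $\mu$ reinstalls $N(L_1)$ and restores $S^3$. Equivalently, $L_1 \cup \mu$ is a $0$-framed Hopf link, and $0$-framed surgery on a Hopf link in $S^3$ is again $S^3$ (the associated cobordism is $S^2 \times S^2 \sm B^4$, whose other boundary component is $S^3$). Since $\mu$ can be chosen inside a small tubular neighborhood of $L_1$ disjoint from $L_2$, this entire cancellation takes place away from $L_2$ and in particular does not modify it.

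After the cancellation only the $0$-surgery on $L_2$ remains, now performed in $S^3$. As $L_2$ is unknotted this produces $S^1 \times S^2$, proving the claim for $K$; interchanging $L_1$ and $L_2$ yields the same conclusion for $K'$.
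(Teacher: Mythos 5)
Your proof is correct and follows essentially the same route as the paper's: the paper likewise observes that the components of the surgery link are $0$-framed unknots with $K$, $K'$ their meridians, and that $0$-surgery on the meridian cancels the corresponding component from the diagram, leaving a $0$-framed unknot whose surgery is $S^1\times S^2$. Your write-up merely makes explicit the geometric justification for the meridian cancellation (and implicitly the routine check that the $0$-framing of $K$ in $M_n$ agrees with the $S^3$-framing of $\mu$), which the paper leaves as standard Kirby calculus.
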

\begin{proof} Consider the surgery diagram of $M_n$ given in Figure~\ref{fig:0}. Each component of the surgery link is unknotted and $K$ and $K'$ are the meridians of the components. Since $K$ is the meridian of the left component, doing $0$-surgery on $K$ removes the left component from the diagram. The result is $S^1\times S^2$ as the right component is unknotted. The argument is symmetric for $K'$.
\end{proof}


It remains to find a sequence $\{n_i\}_{i=1}^\infty$ so that the knots $K$ and $K'$ are distinct and $M_{n_i}$ are mutually distinct. For this purpose, we use Thurston's hyperbolic Dehn filling theorem \cite{Thurston:1978-1} which we recall for completeness.

Let $M$ be a cusped hyperbolic $3$-manifold with $\ell$ cusps. Let $\partial \overline{M} = T_1 \cup T_2 \cup \cdots \cup T_\ell$ and fix generators $\mu_i, \lambda_i$ of fundamental group $\pi_1(T_i)$, for each $i$. Let $(s_1, s_2, \ldots, s_{\ell})$ be a sequence where $s_i$ is either the symbol $\infty$ or a rational number $\frac{p_i}{q_i}$ such that $p_i$ and $q_i$ are coprime integers. Let $M(s_1, s_2, \ldots, s_{\ell})$ be the 3-manifold obtained from $M$ by filling in $T_i$ with a solid torus using the slope $\frac{p_i}{q_i}$ if $s_i=\frac{p_i}{q_i}$ and by not filling in if $s_i=\infty$. 

\begin{theorem}[{\cite{Thurston:1978-1}}]\label{thm:thurston} Let $M$ be a cusped hyperbolic $3$-manifold with $\ell$ cusps. Then $M(s_1, s_2, \ldots, s_{\ell})$ is hyperbolic for all but finitely many sequences $(s_1,s_2,\ldots,s_{\ell})$. Moreover, 
$$\vol(M(s_1, s_2, \ldots, s_{\ell}))\nearrow\vol(M)$$ as all $p^2_i+q^2_i$ approach infinity for all nontrivial framings $\frac{p_i}{q_i}$.
\end{theorem}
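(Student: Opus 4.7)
The plan is to follow Thurston's original argument via the deformation theory of hyperbolic structures. Let $\rho_0 \colon \pi_1(M) \to \mathrm{PSL}(2,\mathbb{C})$ be the discrete faithful representation realizing the complete finite-volume hyperbolic structure on $M$, unique up to conjugation by Mostow--Prasad rigidity. The strategy is to deform $\rho_0$ through incomplete hyperbolic structures whose metric completions are precisely the Dehn fillings $M(s_1,\ldots,s_\ell)$.

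First I would set up the deformation space. Using either the cohomological local rigidity of the cusp subgroups (Weil/Garland) together with a controlled analysis of the ends, or concretely via Thurston's gluing equations for an ideal triangulation, one shows that the $\mathrm{PSL}(2,\mathbb{C})$-character variety of $\pi_1(M)$ is a smooth complex manifold of dimension $\ell$ at $\rho_0$, with local coordinates given by the logarithmic holonomies $u_i$ of the meridians $\mu_i$. For each nearby representation, the logarithmic holonomy of $\lambda_i$ becomes an analytic function $v_i = \tau_i u_i + O(u_i^2)$, where $\tau_i$ encodes the shape of the $i$-th cusp cross section at the complete structure.

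Next I would analyze the metric completions of these incomplete structures. A typical deformed structure is incomplete at each cusp; its completion either adds a single point (a cone-type singularity) or smoothly glues in a solid torus, and the solid torus is attached along the slope $p_i \mu_i + q_i \lambda_i$ precisely when $p_i u_i + q_i v_i = 2\pi i$. For each slope $p_i/q_i$, this equation has a unique small solution $u_i$ whenever $p_i^2 + q_i^2$ is sufficiently large, since the linear map $(p,q)\mapsto p + q\tau_i$ has nonzero image. Solving simultaneously over all cusps produces a genuine hyperbolic metric on $M(s_1,\ldots,s_\ell)$ for all but finitely many tuples, which yields the first conclusion.

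For the volume statement I would apply the Schl\"afli formula to differentiate $\vol$ along the deformation and combine it with the Neumann--Zagier asymptotic expansion; the leading correction to $\vol(M)$ is strictly negative and tends to $0$ as every $p_i^2+q_i^2 \to \infty$, giving both $\vol(M(s_1,\ldots,s_\ell)) < \vol(M)$ and the monotone convergence $\vol(M(s_1,\ldots,s_\ell))\nearrow\vol(M)$. The main obstacle is the foundational setup: establishing that the character variety is smooth of the correct dimension at the complete structure and, more delicately, verifying that the metric completion of an incomplete deformed structure is a manifold Dehn filling rather than some singular or non-Hausdorff quotient. Once these analytic and geometric preliminaries are in place, the remaining finiteness and volume claims reduce to standard calculus on a finite-dimensional complex analytic space.
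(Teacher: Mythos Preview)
The paper does not prove this theorem at all: Theorem~\ref{thm:thurston} is quoted from Thurston's notes as an external input, with no argument given, so there is nothing in the paper against which to compare your proposal.

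That said, what you have written is a reasonable outline of the standard proof. Your sketch of the deformation-theoretic setup (smoothness of the character variety at the complete structure, holonomy coordinates $u_i$, the filling equation $p_i u_i + q_i v_i = 2\pi i$) is the usual route, and invoking Schl\"afli together with the Neumann--Zagier expansion is the standard way to get the volume asymptotics. One small caution: your phrasing ``hyperbolic for all but finitely many sequences'' (matching the paper's statement) is a bit loose; the correct conclusion of the argument you outline is that there is a finite set of slopes at each cusp outside of which the filling is hyperbolic, not that only finitely many tuples fail. Also, you correctly flag the main technical point---that the metric completion of the deformed incomplete structure is the honest Dehn-filled manifold---but be aware that this step is genuinely delicate and is where most of the work in a full proof lies; your proposal acknowledges this as an obstacle rather than resolving it, so as written this is a proof \emph{plan} rather than a proof.
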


\begin{figure}[htb]
\includegraphics{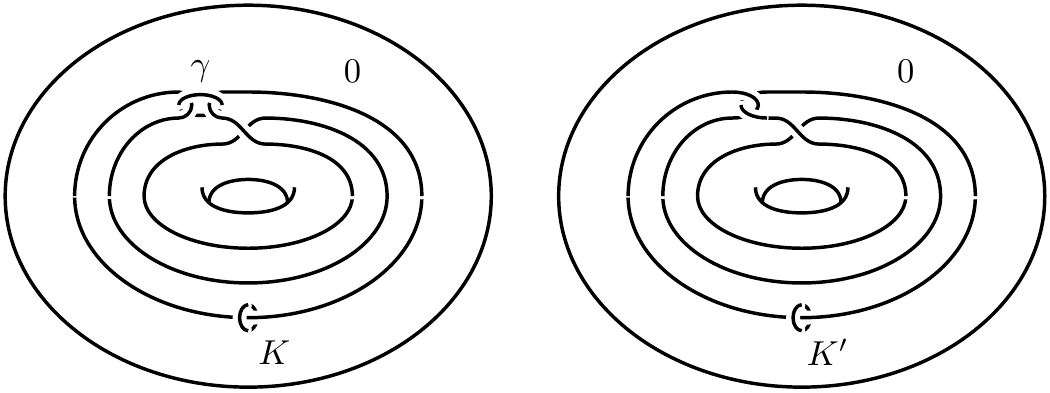}
\caption{The 3-manifolds $N$ (left) and $N'$ (right).}\label{fig:JSJ}
\end{figure} 
To decompose $M_n\sm K$ and $M_n\sm K'$ along an embedded torus, we consider the 3-manifolds $N$ and $N'$ with tori boundary given by the surgery diagrams of Figure~\ref{fig:JSJ}. In Figure~\ref{fig:JSJ}, the surgery links are in $S^1\times D^2$ so that they represent 3-manifolds with tori boundary.

\begin{figure}[htb]
\includegraphics{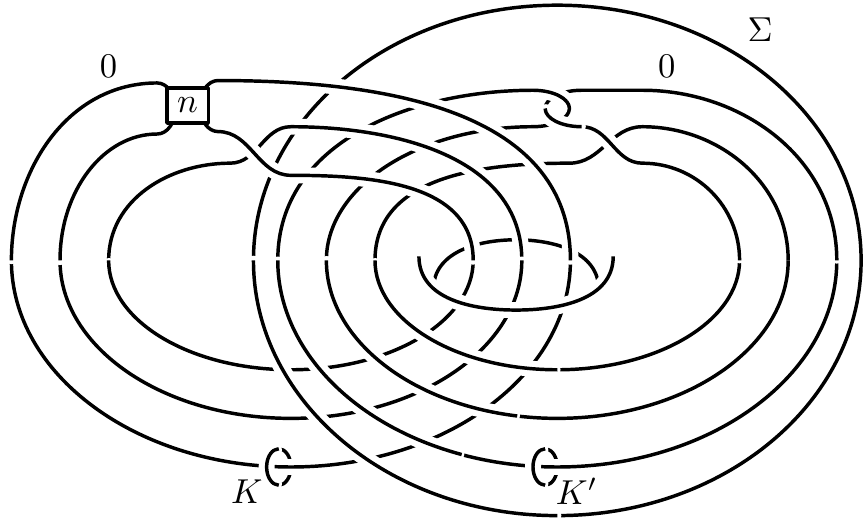}
\caption{An embedded torus $\Sigma$ in $M_n$.}\label{fig:Mn}
\end{figure}

Let $\Sigma\subset M_n$ be an embedded torus in Figure~\ref{fig:Mn}. For each $n\geq 1$, $M_n$ decomposes into
\[M_n=N_n\cup_{\Sigma} N'\]
 where $N_n$ is the 3-manifold obtained from the left diagram of Figure~\ref{fig:JSJ} by performing $-\frac{1}{n}$-surgery on $\gamma$. As depicted in Figure~\ref{fig:JSJ}, we remark that the knots $K$ and $K'$ lie in $N_n$ and $N'$, respectively.
\begin{proposition}\label{prop:JSJ}
For sufficiently large $n$, the knot exteriors $M_n\smallsetminus K$ and $M_n\smallsetminus K'$ have JSJ decompositions $(N_n\smallsetminus K )\cup_\Sigma N'$ and $N_n \cup_\Sigma (N'\smallsetminus K')$, respectively. Moreover, each of the JSJ pieces and $N\smallsetminus \gamma$ are hyperbolic and $\vol(N\smallsetminus \gamma)=7.32772\cdots$ and $\vol(N')=3.66386\cdots$.
\end{proposition}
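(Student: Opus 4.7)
The plan is to reduce the proposition to a single computer-assisted hyperbolicity check for four cusped manifolds, after which Thurston's hyperbolic Dehn filling theorem (Theorem~\ref{thm:thurston}) together with uniqueness of the JSJ decomposition will deliver the result essentially automatically. The four cusped manifolds in question are $N\smallsetminus\gamma$, $N\smallsetminus(K\cup\gamma)$, $N'$, and $N'\smallsetminus K'$. Note that $N_n$ is obtained from $N\smallsetminus\gamma$ by $-1/n$-Dehn filling along $\gamma$, and similarly $N_n\smallsetminus K$ is obtained from $N\smallsetminus(K\cup\gamma)$ by the same filling; thus once $N\smallsetminus\gamma$ and $N\smallsetminus(K\cup\gamma)$ are known to be hyperbolic, Theorem~\ref{thm:thurston} will provide hyperbolicity of $N_n$ and $N_n\smallsetminus K$ for all sufficiently large $n$, since the slopes $-1/n$ satisfy $p^2+q^2=1+n^2\to\infty$.

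For the hyperbolicity step, I would read off an ideal triangulation of each of the four cusped manifolds from its surgery presentation in Figure~\ref{fig:JSJ} (treating $K$, $K'$, and $\gamma$ as unfilled cusps) and input it into SnapPy. Solving Thurston's gluing equations numerically gives candidate hyperbolic structures, and verified interval arithmetic along the lines of HIKMOT certifies that every tetrahedron shape has positive imaginary part, upgrading the numerical output to a rigorous proof of hyperbolicity. The same computation yields the stated volumes $\vol(N\smallsetminus\gamma)=7.32772\cdots$ and $\vol(N')=3.66386\cdots$.

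Granting hyperbolicity of all four JSJ pieces, each piece has incompressible toral boundary, so the separating torus $\Sigma$ is essential in both $M_n\smallsetminus K=(N_n\smallsetminus K)\cup_\Sigma N'$ and $M_n\smallsetminus K'=N_n\cup_\Sigma(N'\smallsetminus K')$. Since both complementary pieces of $\Sigma$ are hyperbolic, hence atoroidal and non-Seifert-fibered, the uniqueness theorem of Jaco--Shalen--Johannson \cite{Jaco-Shalen:1978-1, Johannson:1979-1} then identifies $\{\Sigma\}$ as the JSJ system in each case. The main obstacle is the first step: the rigorous numerical certification of hyperbolicity and the stated volumes for the four base manifolds. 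Once this computer-assisted step is in hand, Theorem~\ref{thm:thurston} and JSJ uniqueness handle the remainder cleanly.
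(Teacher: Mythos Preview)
Your proposal is correct and follows essentially the same approach as the paper: verify via SnapPy (with rigorous interval arithmetic) that the four base manifolds $N\smallsetminus\gamma$, $N\smallsetminus(K\cup\gamma)$, $N'$, and $N'\smallsetminus K'$ are hyperbolic with the stated volumes, then apply Thurston's hyperbolic Dehn filling theorem to conclude that $N_n$ and $N_n\smallsetminus K$ are hyperbolic for large $n$, so that the splitting along $\Sigma$ into two hyperbolic pieces is the JSJ decomposition. If anything, your write-up is slightly more explicit than the paper's about why hyperbolicity of the pieces forces $\{\Sigma\}$ to be the JSJ system.
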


\begin{proof} By using Snappy \cite{SnapPy} within Sage, we have checked that $N\smallsetminus \gamma$, $N\smallsetminus (\gamma\cup K)$, $N'$, and $N'\smallsetminus K'$ are hyperbolic and $\vol(N\smallsetminus \gamma)=7.32772\cdots$ and $\vol(N')=3.66386\cdots$:
\begin{verbatim}
sage: Ng=snappy.Manifold(`N-gamma.tri')
sage: Ng.verify_hyperbolicity()
(True,
 [1.1126478571421? + 0.5253107612663?*I,
  0.3377160281882? + 0.5374734379681?*I,
  0.9103587832338? + 0.7387973827458?*I,
  0.39026985863857? + 0.18005453683689?*I,
  0.9581510078158? + 0.5060866938299?*I,
  0.4067155957530? + 0.1767530520671?*I,
  -1.1126478571421? + 0.9746892387338?*I,
  0.1839808394078? + 0.4310139380090?*I,
  0.5481237393974? + 0.46122162314913?*I,
  -0.3902698586386? + 1.8199454631631?*I])
sage: Ng.volume()
7.32772475341775

sage: NgK=snappy.Manifold(`N-gammaUK.tri')
sage: NgK.verify_hyperbolicity()
(True,
 [0.6285329320609? + 1.2645427568179?*I,
  0.3930756888787? + 1.1360098247571?*I,
  0.3658649082287? + 0.6848071871001?*I,
  0.2138486222426? + 0.7279803504860?*I,
  0.7279803504860? + 0.7861513777575?*I,
  0.3658649082287? + 0.6848071871001?*I,
  0.6285329320609? + 1.2645427568179?*I,
  0.3658649082287? + 0.6848071871001?*I,
  0.3930756888787? + 1.1360098247571?*I,
  0.2138486222426? + 0.7279803504860?*I])

sage: Nprime=snappy.Manifold(`Nprime.tri')
sage: Nprime.verify_hyperbolicity()
(True,
 [0.?e-14 + 2.00000000000000?*I,
  -1.00000000000000? + 2.00000000000000?*I,
  1.20000000000000? + 0.40000000000000?*I,
  1.00000000000000? + 0.50000000000000?*I,
  0.20000000000000? + 0.40000000000000?*I])
sage: Nprime.volume()
3.66386237670887

sage: NprimeKprime=snappy.Manifold(`Nprime-Kprime.tri')
sage: NprimeKprime.verify_hyperbolicity()
(True,
 [0.23278561593839? + 0.79255199251545?*I,
  0.65883609808599? + 1.16154139999725?*I,
  0.63054057134146? + 0.65136446417090?*I,
  0.65883609808599? + 1.16154139999725?*I,
  0.02829552674454? + 0.51017693582636?*I])\end{verbatim}

By Theorem~\ref{thm:thurston}, $N_n\smallsetminus K$ and $N_n$ are also hyperbolic for sufficiently large $n$. The proof is complete by noting that gluing two hyperbolic pieces gives a JSJ decomposition.\end{proof}

We are now ready to prove Theorem~\ref{thm:main}. 

\begin{proof}[Proof of Theorem~\ref{thm:main}] 
Let $K$ and $K'$ be the knots in the homology sphere $M_n$ as described above.
By Proposition~\ref{prop:S1S2}, $0$-surgeries on $K$ and $K'$ are $S^1\times S^2$. Suppose they are equivalent, then by the uniqueness of JSJ decomposition and Proposition~\ref{prop:JSJ}, $N_n$ is homeomorphic to $N'$, for large enough $n$. This is not possible since by Proposition~\ref{prop:JSJ} \[\vol(N_n)\nearrow\vol(N\smallsetminus \gamma)=7.32772\cdots \text{ and }\vol(N')=3.66386\cdots.\]

Lastly, we show that there exists a sequence  $\{n_i\}_{i=1}^\infty$ where $M_{n_i}$ and $M_{n_j}$ are homeomorphic if and only if $n_i = n_j$. Since $N\smallsetminus \gamma$ is hyperbolic by Proposition~\ref{prop:JSJ}, for large enough $n$, $N_n$ is hyperbolic by Theorem~\ref{thm:thurston} and $M_n$ has a JSJ decomposition $N_n \cup N'$. Moreover, by the uniqueness of JSJ decomposition and by Theorem~\ref{thm:thurston}, there exists a sequence $\{n_i\}_{i=1}^\infty$ where $M_{n_i}$ and $M_{n_j}$ are homeomorphic if and only if $n_i = n_j$. The proof is completed, by choosing the homology spheres to be $M_{n_i}$ and the knots to be $K$ and $K'$.\end{proof}

\bibliographystyle{amsalpha}
\def\MR#1{}
\bibliography{bib}

\end{document}